\documentclass[11pt,a4paper]{amsart}
\usepackage{amssymb}
\usepackage{amsfonts}
\usepackage{graphicx}
\usepackage{epsfig}
\usepackage{mathtools}
\usepackage{amsmath}
\makeatletter
\newcommand\Label[1]{&\refstepcounter{equation}(\theequation)\ltx@label{#1}&}
\makeatother

\theoremstyle{plain}
\newtheorem{theorem}{Theorem}[section]
\newtheorem{proposition}[theorem]{Proposition}
\newtheorem{lemma}[theorem]{Lemma}
\newtheorem{corollary}[theorem]{Corollary}
\theoremstyle{definition}

\newtheorem{remark}[theorem]{Remark}

\newtheorem{case}{Case}

\newcommand{\R}{\mathbb{R}}
\newcommand{\C}{\mathbb{C}}

\newcommand{\bR}{{\mathbb R}}

\def\H{\mathbb{H}}

\def\<{\langle}
\def\>{\rangle}
\def\ch#1{{{\bf H}^{#1}_{\C}}}
\def\hh#1{{{\bf H}^{#1}_{\H}}}
\def\P{\mathbb P}
\begin{document}
\title[Quaternionic Kleinian groups]{A characterization of quaternionic Kleinian groups in dimension $2$ with complex trace fields}
\author{Sungwoon Kim}
\author{Joonhyung Kim}

   \address{Department of Mathematics, Jeju National University, Jeju, 690-756, Republic of Korea}
   \email{sungwoon@kias.re.kr}

\address{Joonhyung Kim \\
    Department of Mathematics Education, Hannam University\\
           70 Hannam-ro, Daedeok-gu\\
           Daejeon 306-791, Republic of Korea}
\email{calvary\char`\@snu.ac.kr}

        \date{}
        \maketitle

\begin{abstract}
Let $G$ be a non-elementary discrete subgroup of $\mathrm{Sp}(2,1)$. We show that if the sum of diagonal entries of each element of $G$ is a complex number, then $G$ is conjugate to a subgroup of $\mathrm{U}(2,1)$.
\end{abstract}
\footnotetext[1]{2010 {\sl{Mathematics Subject Classification.}}
22E40, 30F40, 57S30.} \footnotetext[2]{{\sl{Key words and phrases.}}
Quaternionic Kleinian group, trace field.}
\footnotetext[3]{This work was supported by the Project ``PoINT" of Jeju National University in 2016.}

\section{Introduction}

Given a Kleinian group $G$ of $\mathrm{PSL}(2,\mathbb C)$, its trace field, denoted by $\mathbb Q(\text{tr} G)$, is defined as the field generated by the traces of its elements.
The trace fields have played an important role in studying arithmetic aspects of Kleinian groups.
Neumann and Reid \cite{NR} have studied the trace fields of arithmetic lattices in $\mathrm{PSL}(2,\mathbb C)$.
They showed that if $G$ is a non-uniform arithmetic lattice, it is conjugate to a subgroup of $\mathrm{PSL}(2, \mathbb Q(\text{tr}G))$.

Even if the notion of trace field was first defined for Kleinian groups in $\mathrm{PSL}(2,\mathbb C)$, it is possible to extend the notion to complex and quaternionic Kleinian groups.
Indeed there have been a few studies concerning the trace fields of complex and quaternionic Kleinian groups.
Most of studies on the trace fields of complex Kleinian groups have focused on extending the results in the case of $\mathrm{PSL}(2,\mathbb C)$ to $\mathrm{SU}(n,1)$.
McReynolds \cite{Mc} showed that the trace fields of complex Kleinian groups are commensurability invariants as for real Kleinian groups.
Cunha-Gusevskii \cite{CG} and Genzmer \cite{Ge} studied whether a discrete subgroup of $\mathrm{SU}(2,1)$ can be realized over its trace field.

A central theme in studying the trace fields of complex Kleinian groups is to characterize complex Kleinian groups with real trace fields.
It turns out that any non-elementary complex Kleinian group with real trace field preserves a totally geodesic submanifold of constant negative sectional curvature in the complex hyperbolic space.
Cunha-Gusevskii \cite{CG} and  Fu-Li-Wang \cite{FLW12} proved this for Kleinian groups in $\mathrm{SU}(2,1)$, and then Kim-Kim \cite{KK14} extended it to $\mathrm{SU}(3,1)$.
Recently J. Kim and S. Kim \cite{KK} extended this result to $\mathrm{SU}(n,1)$ in general. Furthermore they showed that any non-elementary quaternionic Kleinian group with real trace field 
is also conjuate to a subgroup of either $\mathrm{SO}(n,1)$ or $\mathrm{SU}(1,1)$.

For quaternionic Kleinian groups, J. Kim \cite{JKim} proved that if a non-elementary quaternionic Kleinian group $G$ in $\mathrm{Sp}(3,1)$ has a loxodromic element fixing $0$ and $\infty$, and the sum of diagonal entries of each element of $G$ is real, then $G$ preserves a totally geodesic submanifold of constant negative sectional curvature in the quaternionic hyperbolic space. Then the result is extended to general $\mathrm{Sp}(n,1)$ case by J. Kim and S. Kim \cite{KK}.

The studies so far have focused on characterizing non-elementary discrete groups with real trace fields.
It is very natural to ask what if the ``real'' is replaced with ``complex''.
In this article, we give the answer for this question in the case of  $\mathrm{Sp}(2,1)$. The main theorem is the following.

\begin{theorem}
Let $G < \mathrm{Sp}(2,1)$ be a non-elementary quaternionic Kleinian group containing a loxodromic element fixing $0$ and $\infty$. If the sum of diagonal entries of each element of $G$ is in $\C$, then $G$ preserves a totally geodesic submanifold of $\mathbf H_{\mathbb H}^2$ that is isometric to $\mathbf H_{\mathbb C}^2$. In other words, $G$ is conjugate to $\mathrm{U}(2,1)$.
\end{theorem}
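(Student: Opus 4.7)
The strategy is to exploit the diagonal form of $A$, isolate the ``$j$-component'' of every entry of an arbitrary $B \in G$ using the trace hypothesis applied to carefully chosen words in $G$, and then show each such component must vanish by invoking non-elementarity.

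\emph{Normalization.} Since $A \in G$ fixes $0$ and $\infty$ in the Siegel model, $A = \operatorname{diag}(a_1, a_2, a_3)$ with $a_1 = a \in \mathbb{H}^\ast$, $a_2 = b \in \mathrm{Sp}(1)$, $a_3 = \bar a^{-1}$ and $|a| \neq 1$. Applying the hypothesis to $A^n$ for all $n \in \mathbb{Z}$ and using that $a^n, \bar a^{-n}, b^n$ each stay in a $2$-dimensional real subspace of $\mathbb{H}$ determined by the imaginary direction of $a$ (resp.\ $b$), a short component-by-component computation forces $a, b$ to lie in $\mathbb{C}$ (the fixed complex subfield in the hypothesis). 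Fix the decomposition $\mathbb{H} = \mathbb{C} \oplus \mathbb{C} j$ and write each entry of $B \in G$ as $B_{ij} = z_{ij} + w_{ij} j$ with $z_{ij}, w_{ij} \in \mathbb{C}$. The conclusion $G \subset \mathrm{U}(2,1)$ is equivalent to $w_{ij}(B) = 0$ for all $i, j$ and all $B \in G$.

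\emph{Diagonal entries.} For $B \in G$, the hypothesis yields $\operatorname{tr}(A^n B) = \sum_i a_i^n B_{ii} \in \mathbb{C}$. Since $a_i \in \mathbb{C}$, the $\mathbb{C} j$-component of this sum is $\sum_i a_i^n w_{ii}$, which must vanish for every $n$. The $a_i$ are pairwise distinct complex numbers (their moduli are $|a|, 1, |a|^{-1}$), so a Vandermonde argument gives $w_{ii} = 0$ for $i = 1, 2, 3$, i.e., $B_{ii} \in \mathbb{C}$.

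\emph{Off-diagonal entries.} For the off-diagonals, apply the hypothesis to $\operatorname{tr}(A^m B A^n C) = \sum_{i,k} a_i^m B_{ik} a_k^n C_{ki}$ for arbitrary $B, C \in G$ and $m, n \in \mathbb{Z}$. A direct expansion using $j \alpha = \bar\alpha j$ for $\alpha \in \mathbb{C}$ gives the $\mathbb{C} j$-component of this trace in the form
\[
\sum_{i,k} a_i^m a_k^n\, z_{ik}(B)\, w_{ki}(C) + \sum_{i,k} a_i^m \bar a_k^n\, w_{ik}(B)\, \bar z_{ki}(C),
\]
which must vanish for all $m, n \in \mathbb{Z}$. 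Under mild genericity on $a, b$ the characters $(m,n) \mapsto a_i^m a_k^n$ and $(m,n) \mapsto a_i^m \bar a_k^n$ are pairwise linearly independent on $\mathbb{Z}^2$; Vandermonde extraction then yields
\[
z_{ik}(B)\, w_{ki}(C) = 0 \quad\text{and}\quad w_{ik}(B)\, z_{ki}(C) = 0
\]
for all $i, k$ and all $B, C \in G$.

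\emph{Dichotomy and non-elementarity.} For each off-diagonal pair $\{i, k\}$ these bilinear vanishings produce a dichotomy: either $w_{ik}, w_{ki}$ both vanish identically on $G$ (the desired case), or one of $B_{ik}, B_{ki}$ is identically zero on $G$, or $B_{ik}$ and $B_{ki}$ both lie entirely in $\mathbb{C} j$ for every $B \in G$. Ruling out the last two alternatives is the main obstacle. Identical vanishing of an entry $B_{ik}$ on $G$ confines $G$ to a block-triangular subgroup stabilizing either $0$ or $\infty$; iterating this reasoning across pairs forces $G$ into the joint stabilizer of $\{0, \infty\}$, contradicting non-elementarity. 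The ``$\mathbb{C} j$'' alternative is not stable under matrix multiplication in the $3 \times 3$ setting: for distinct $i, l, k$ the term $(B_1)_{il} (B_2)_{lk}$ in $(B_1 B_2)_{ik}$ contributes a $\mathbb{C}$-part that by closure must vanish, forcing $w_{il}$ or $w_{lk}$ to be identically zero on $G$; combined with the $z$-vanishings already assumed, this reduces to the previous case. In every branch one contradicts non-elementarity unless $w_{ij}(B) = 0$ for all $i, j$ and all $B \in G$, which is precisely $G \subset \mathrm{U}(2,1)$.
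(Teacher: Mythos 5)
There is a genuine gap, and it starts with the target you set yourself. You reduce the theorem to showing $w_{ij}(B)=0$ for all $B\in G$, i.e.\ that $G$ is literally contained in $\mathrm{U}(2,1)$. But the theorem only asserts that $G$ is \emph{conjugate} into $\mathrm{U}(2,1)$, and the stronger statement is not provable: the paper's proof runs through several cases in which $G$ preserves a non-standard copy of $\mathbf H^2_{\mathbb C}$ and is not contained in $\mathrm{U}(2,1)$ itself. Concretely, your claim that the ``$\mathbb{C}j$ alternative is not stable under matrix multiplication'' is false. If the entries in positions $(1,2),(2,1),(2,3),(3,2)$ lie in $\mathbb{C}j$ while the remaining five entries lie in $\mathbb{C}$, then in a product the cross terms such as $(B_1)_{12}(B_2)_{21}=b_{*}j\,d_{*}j=-b_{*}\bar d_{*}\in\mathbb{C}$ land in a \emph{diagonal} entry, which is supposed to be complex, so there is no contradiction; and every off-diagonal entry of the product is a sum of (complex)$\cdot(\mathbb{C}j)$ terms, hence again in $\mathbb{C}j$. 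This pattern is closed under multiplication, it genuinely occurs (the paper's Case 2), and it is handled there by exhibiting the invariant copy of $\mathbf H^2_{\mathbb C}$ with polar vectors $[z_1,\ jz_2,\ 1]^t$. Worse, your dichotomy omits an entire outcome: the case where an off-diagonal entry $b$ is a general quaternion (neither in $\mathbb{C}$ nor in $\mathbb{C}j$) with $d=r_1\bar b$, $f=r_2\bar b$, $h=r_1r_2b$; there both $z_{12}(B)$ and $w_{12}(B)$ are nonzero, your bilinear relations $z_{ik}(B)w_{ki}(C)=0$ fail for $B=C$, and the invariant copy is the one with polar vectors $[z_1,\ \bar b z_2,\ z_3]^t$.

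The reason your relations fail there is exactly the ``mild genericity'' you assume and do not justify. The paper proves (via the analogue of your $tr(A^mBA^nC)$ computation, its Lemma 3.3) that whenever a general-quaternion entry as above occurs, one necessarily has $\theta\equiv 0 \pmod{\pi/2}$, so the diagonal entries of $A$ satisfy $a_k^n=\bar a_k^n$ (up to the relevant entries being real); the characters $(m,n)\mapsto a_i^ma_k^n$ and $(m,n)\mapsto a_i^m\bar a_k^n$ then coincide and the Vandermonde extraction only yields the single relation $z_{ik}(B)w_{ki}(C)+w_{ik}(B)\bar z_{ki}(C)=0$, not the two separate vanishings. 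In other words, the degenerate parameter values are not a removable corner case: they are forced precisely in the hardest configurations, which the paper must then eliminate or realize by a genuinely different argument (the $r_2=-1$ contradiction via $|b|=|b|/\lambda$, the purely imaginary $c$ analysis, Lemma 2.3 on $qi\bar q$, etc.). Your normalization step (that the diagonal loxodromic $A$ has complex entries) is correct in conclusion but also far from a ``short computation''; the paper needs the $j$- and $k$-parts of $tr(A^2)$, $tr(A^3)$ and $tr(A^4)$ to establish it. As written, the proposal proves the theorem only under an unstated genericity hypothesis on $A$ and, even then, aims at a conclusion stronger than the true one.
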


\section{Quaternionic hyperbolic space}
The materials of this chapter are borrowed from \cite{JKim}. For basic notions, we refer \cite{JKim} for the reader and for more information, see \cite{KP}.

Let $\H^{2,1}$ be a quaternionic vector space of dimension $3$ with a
Hermitian form of signature $(2,1)$. An element of $\H^{2,1}$ is a
column vector $p=(p_1,p_2,p_3)^t$. Throughout the paper, we choose
the second Hermitian form on $\H^{2,1}$ given by a matrix
$$
J=\left[\begin{matrix} 0 & 0 & 1 \\ 0 & 1 & 0 \\ 1 & 0 & 0
\end{matrix}\right].
$$
Thus $$\<p,q\>=q^*Jp=\bar{q}^tJp=\bar{q}_1p_3+\bar{q}_2p_2+\bar{q}_3p_1,$$
where $p=(p_1,p_2,p_3)^t, q=(q_1,q_2,q_3)^t \in \H^{2,1}$.

One model of a quaternionic hyperbolic 2-space $\hh{2}$, which matches
this Hermitian form is the \emph{Siegel domain $\mathfrak{S}$}. It is defined by
identifying points of $\mathfrak{S}$ with their horospherical
coordinates, $p=(\zeta,v,u) \in \H \times \mathrm{Im}(\H) \times \R_+$. The boundary of $\mathfrak{S}$ is given by $(\H \times \mathrm{Im}(\H)) \cup \{\infty\}$, where $\infty$ is a distinguished point at infinity.
Define a map $\psi:\overline{\mathfrak{S}}\rightarrow \P \H^{2,1}$
by
$$
\psi:(\zeta,v,u)\mapsto \left[\begin{matrix}
-|\zeta|^2-u+v
\\ \sqrt{2}\zeta \\ 1\end{matrix}\right] \hbox{ for }
(\zeta,v,u)\in\overline{\mathfrak{S}}-\{\infty\} \hbox{   ; }
\psi:\infty\mapsto\left[\begin{matrix} 1
\\ 0 \\ 0\end{matrix}\right].
$$
Then $\psi$ maps $\mathfrak{S}$ homeomorphically to the set of
points $p$ in $\P \H^{2,1}$ with $\langle p,p\rangle<0$, and maps
$\partial \mathfrak{S}$ homeomorphically to the set of points $p$
in $\P \H^{2,1}$ with $\langle p,p\rangle=0$.
There is a metric on $\mathfrak{S}$ called the Bergman metric and the isometry group of $\hh{2}$ with respect to this metric is
\begin{align*}
\mathrm{Sp}(2,1) &=\{A \in \mathrm{GL}(3,\H) : \langle p,p' \rangle = \langle Ap,Ap' \rangle, p,p' \in \H^{2,1}\}\\
&= \{A \in \mathrm{GL}(3,\H) : J=A^*JA\},
\end{align*}
where $A:\H^{2,1} \rightarrow \H^{2,1}; x\H \mapsto (Ax)\H$ for $x\in \H^{2,1}$ and $A \in \mathrm{Sp}(2,1)$. As in \cite{Kim}, we adopt the convention that the action of $\mathrm{Sp}(2,1)$ on $\hh{2}$ is left and the action of projectivization of $\mathrm{Sp}(2,1)$ is right action. If we write
$$
A=\left[\begin{matrix} a & b & c \\ d & e & f \\ g & h & l
\end{matrix}\right] \in \mathrm{PSp}(2,1),
$$
$A^{-1}$ is written as
$$
A^{-1}=\left[\begin{matrix} \bar{l} & \bar{f} & \bar{c} \\ \bar{h} & \bar{e} & \bar{b} \\ \bar{g} & \bar{d} & \bar{a}
\end{matrix}\right] \in \mathrm{PSp}(2,1).
$$
Then, from $AA^{-1}=A^{-1}A=I$, we get the following identities.
\begin{align*}
& a\bar{l}+b\bar{h}+c\bar{g} =1 \Label{1},& & a\bar{f}+b\bar{e}+c\bar{d} =0 \Label{2}, & & a\bar{c}+|b|^2+c\bar{a} =0 \Label{3},  \\
& d\bar{l}+e\bar{h}+f\bar{g} =0 \Label{4},& & d\bar{f}+|e|^2+f\bar{d} =1 \Label{5}, & & d\bar{c}+e\bar{b}+f\bar{a} =0 \Label{6}, \\
& g\bar{l}+|h|^2+l\bar{g}=0 \Label{7},& & g\bar{f}+h\bar{e}+l\bar{d}=0 \Label{8},& & g\bar{c}+h\bar{b}+l\bar{a}=1 \Label{9}, \\
& \bar{l}a+\bar{f}d+\bar{c}g=1 \Label{10},& & \bar{l}b+\bar{f}e+\bar{c}h=0 \Label{11},& & \bar{l}c+|f|^2+\bar{c}l=0 \Label{12}, \\
& \bar{h}a+\bar{e}d+\bar{b}g=0 \Label{13},& & \bar{h}b+|e|^2+\bar{b}h=1 \Label{14},& & \bar{h}c+\bar{e}f+\bar{b}l=0 \Label{15}, \\
& \bar{g}a+|d|^2+\bar{a}g=0 \Label{16},& & \bar{g}b+\bar{d}e+\bar{a}h=0 \Label{17},& & \bar{g}c+\bar{d}f+\bar{a}l=1 \Label{18}.
\end{align*}

\begin{remark}\label{remark}
If $c=0$, then $f=0$ by (12) and hence $A$ fixes $\mathbf 0=[0,0,1]^t$. Similarly, if $g=0$, then $d=0$ by (16) and hence $A$ fixes $\infty=[1,0,0]^t$.
\end{remark}
Note that totally geodesic submanifolds of quaternionic hyperbolic 2-space are isometric to one of $\mathbf H_{\mathbb H}^1$, $\mathbf H_{\mathbb C}^1$,  $\mathbf H_{\mathbb C}^2$, and $\mathbf H_{\mathbb R}^2$. The following proposition is essential in the proof of the main theorem.


\begin{proposition}\label{prop:commuting}
For two nonzero quaternions $a$ and $b$, if $ab$ and $ba$ are complex numbers, then $a$ and $b$ satisfy one of the following;
\begin{itemize}
\item[(i)] $a$, $b \in \C$
\item[(ii)] $a$ and $b$ are of the form $a=a_*j$ and $b=b_*j$ for some $a_*, b_* \in \C$
\item[(iii)] $b=r\bar{a}$ for some $r \in \R-\{0\}$.
\end{itemize}
\end{proposition}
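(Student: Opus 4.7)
The plan is to decompose each quaternion in its complex form and translate the conditions ``$ab \in \mathbb{C}$'' and ``$ba \in \mathbb{C}$'' into two algebraic identities among four complex numbers, then do a short case analysis. Concretely, I would identify $\mathbb{C} = \mathbb{R}[i] \subset \mathbb{H}$ and write
\begin{equation*}
a = \alpha + \beta j, \qquad b = \gamma + \delta j, \qquad \alpha,\beta,\gamma,\delta \in \mathbb{C}.
\end{equation*}
Using $j\lambda = \bar{\lambda}j$ for $\lambda \in \mathbb{C}$ and $j^2 = -1$, expand
\begin{equation*}
ab = (\alpha\gamma - \beta\bar{\delta}) + (\alpha\delta + \beta\bar{\gamma})\,j,\qquad
ba = (\gamma\alpha - \delta\bar{\beta}) + (\gamma\beta + \delta\bar{\alpha})\,j.
\end{equation*}
The hypothesis that both products are complex is equivalent to the vanishing of the two $j$-coefficients, giving the identities
\begin{equation*}
\text{(A)}\ \ \alpha\delta + \beta\bar{\gamma} = 0,\qquad \text{(B)}\ \ \gamma\beta + \delta\bar{\alpha} = 0.
\end{equation*}

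Next I would run a case analysis on which of $\alpha,\beta,\gamma,\delta$ vanish. If $\beta = 0$, then $a = \alpha \neq 0$, so (A) forces $\delta = 0$, and since $b \neq 0$ we have $\gamma \neq 0$; thus $a,b \in \mathbb{C}$, which is (i). The case $\delta = 0$ is symmetric. If $\alpha = 0$ with $\beta \neq 0$, then (A) forces $\gamma = 0$, and $b\neq 0$ gives $\delta\neq 0$, so $a = \beta j$, $b = \delta j$, which is (ii); the case $\gamma = 0$ is symmetric.

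The remaining case, where all four of $\alpha,\beta,\gamma,\delta$ are nonzero, is the only one requiring a real computation and is the main step to watch. From (A) and (B) I would solve for $\alpha$ and $\bar\alpha$ in terms of $\beta,\gamma,\delta$, obtaining $\alpha = -\beta\bar{\gamma}/\delta$ and $\bar{\alpha} = -\beta\gamma/\delta$. Comparing these with their own complex conjugates forces $\beta/\delta \in \mathbb{R}$, so $\beta = t\delta$ for some nonzero $t \in \mathbb{R}$; substituting back in (A) gives $\alpha = -t\bar{\gamma}$. Hence
\begin{equation*}
a = \alpha + \beta j = -t(\bar{\gamma} - \delta j) = -t\,\bar{b},
\end{equation*}
using the identity $\bar{b} = \bar{\gamma} - \delta j$ for the quaternionic conjugate. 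Equivalently $b = r\bar{a}$ with $r = -1/t \in \mathbb{R}\setminus\{0\}$, which is case (iii). The only subtlety is keeping track of the order of multiplication for the complex coefficients together with the relation $j\lambda = \bar\lambda j$; once that is done correctly, the argument reduces to ordinary manipulations inside $\mathbb{C}$.
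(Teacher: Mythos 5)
Your argument is correct. Every step checks out: the expansion $ab=(\alpha\gamma-\beta\bar\delta)+(\alpha\delta+\beta\bar\gamma)j$ and its counterpart for $ba$ are right, the degenerate cases are handled exhaustively (any vanishing coefficient lands you in (i) or (ii) after using $a,b\neq 0$), and in the generic case the two relations $\alpha=-\beta\bar\gamma/\delta$ and $\bar\alpha=-\beta\gamma/\delta$ do force $\beta/\delta\in\mathbb{R}$ and hence $a=-t\bar b$, i.e.\ case (iii). The route is genuinely different from the paper's, though: the authors expand $a$ and $b$ over $\mathbb{R}$ with basis $1,i,j,k$, extract the $j$- and $k$-parts of $ab$ and $ba$ to get the four real bilinear equations $a_0b_2+a_2b_0=0$, $a_3b_1-a_1b_3=0$, $a_0b_3+a_3b_0=0$, $a_1b_2-a_2b_1=0$, and then split into cases according to which of $a_0,a_1,a_2,a_3$ vanish. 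Your Cayley--Dickson decomposition $\mathbb{H}=\mathbb{C}\oplus\mathbb{C}j$ compresses those four real equations into the two complex ones (A) and (B), which makes the case analysis shorter and makes the trichotomy ($\beta=0$, $\alpha=0$, all nonzero) visibly mirror the three conclusions (i), (ii), (iii); the paper's real-coordinate version is more pedestrian but requires no care with the twisted multiplication rule $j\lambda=\bar\lambda j$. The only thing worth stating explicitly in a written version is the identity $\overline{\gamma+\delta j}=\bar\gamma-\delta j$ that you invoke at the end, since the conjugate of $\delta j$ is $-\delta j$ rather than $-\bar\delta j$; you have it right, but it is the one place a sign error would be easy to make.
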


\begin{proof}
Let $a=a_0+a_1i+a_2j+a_3k$ and $b=b_0+b_1i+b_2j+b_3k$. The $j$-part of $ab$ and $ba$ are $a_0b_2+a_2b_0+a_3b_1-a_1b_3$ and $a_0b_2+a_2b_0-a_3b_1+a_1b_3$ respectively. 
Since they should be zero, we have that $a_0b_2+a_2b_0=0$ and $a_3b_1-a_1b_3=0$. In a similar way, by considering $k$-parts instead of $j$-parts, it is deduced that $a_0b_3+a_3b_0=0$ and $a_1b_2-a_2b_1=0$.

When $a_0 \neq 0$, if $a_3 \neq 0$, from above identities, we get 
$$b_3=-\frac{b_0}{a_0}a_3, \ b_2=-\frac{b_0}{a_0}a_2, \  b_1=\frac{b_3}{a_3}a_1=-\frac{1}{a_3}\frac{b_0}{a_0}a_3a_1=-\frac{b_0}{a_0}a_1.$$ Hence $b=\frac{b_0}{a_0}\bar{a}$, that is, (iii) follows.
When $a_0 \neq 0$, if $a_3=0$ and moreover $a_2=0$, then $b_2$ and $b_3$ should be zero by the above identities. This means that $a$ and $b$ are in $\C$. If $a_0\neq 0$, $a_3=0$ and $a_2 \neq 0$, by the above identities, we have $$b_3=0, \ b_2=-\frac{b_0}{a_0}a_2, \ b_1=\frac{b_2}{a_2}a_1=-\frac{1}{a_2}\frac{b_0}{a_0}a_2a_1=-\frac{b_0}{a_0}a_1.$$ Hence $b=\frac{b_0}{a_0}\bar{a}$.

Now we consider the case when $a_0=0$.
If $b_0 \ne 0$, by the above identities, $a_2=a_3=0$ and so $b_2=b_3=0$. Hence $a$ and $b$ are in $\C$.
If $b_0=0$, then $a$ and $b$ are purely imaginary. If $a_1 \ne 0$, then we get $b_3=\frac{b_1}{a_1}a_3$ and $b_2=\frac{b_1}{a_1}a_2$ from the above identities. Thus $b=\frac{b_1}{a_1}a=-\frac{b_1}{a_1}\bar{a}$. If $a_1=0$, then $a_3b_1=a_2b_1=0$ by the above identities. If $b_1\neq 0$, then $a_2=a_3=0$ and thus $a=0$. This contradicts to the assumption $a\neq 0$. 
Hence $b_1=0$. In this case, $a$ and $b$ are of the form $a=a_*j$ and $b=b_*j$ for some $a_*, b_* \in \C$. Therefore we complete the proof.
\end{proof}
The next lemma is quite elementary and the proof is easy by a straight computation.

\begin{lemma}\label{lem:aia}
For a quaternion $q$, if $qi\bar{q}$ and $\bar{q}iq$ are complex numbers, then either $q \in \C$ or $q$ is of the form $q=q_*j$ for some $q_* \in \C$.
\end{lemma}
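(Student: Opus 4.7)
The plan is to prove the lemma by a direct coordinate computation. Write $q = q_0 + q_1 i + q_2 j + q_3 k$ with $q_0,q_1,q_2,q_3 \in \R$ and expand $qi\bar q$ and $\bar q i q$ using the standard multiplication table of $\H$. Both products are automatically pure imaginary, as is visible from $\overline{qi\bar q}=-qi\bar q$ and the analogous identity for $\bar q i q$, so the hypothesis that they lie in $\C$ is equivalent to the vanishing of their $j$- and $k$-components. This yields four homogeneous bilinear equations in $q_0,q_1,q_2,q_3$.

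Next I would simplify this $4\times 4$ quadratic system by taking sums and differences of the $j$-equations arising from $qi\bar q$ and $\bar q i q$, and likewise for the $k$-equations. Because $\bar q i q$ is obtained from $qi\bar q$ by the substitution $q \leftrightarrow \bar q$, which flips the signs of $q_1,q_2,q_3$, I expect the four equations to pair up cleanly and to collapse to the clean identities
\[
q_0 q_2 = q_0 q_3 = q_1 q_2 = q_1 q_3 = 0.
\]
This says precisely that the pairs $(q_0,q_1)$ and $(q_2,q_3)$ cannot both contain a nonzero entry.

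The conclusion is then a short dichotomy: if $(q_0,q_1)\neq(0,0)$, the identities force $q_2=q_3=0$ and hence $q\in\C$; if instead $q_0=q_1=0$, then $q=q_2 j+q_3 k=(q_2+q_3 i)\,j$, which is of the asserted form $q=q_*j$ with $q_*\in\C$.

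I do not anticipate a genuine obstacle here; the only care required is bookkeeping signs when multiplying out quaternions, and the resulting bilinear system really does decouple into the four products above. As a slicker alternative one could bypass the coordinate computation entirely by noting that $qi\bar q$ is pure imaginary of norm $|q|^2$, so membership in $\C$ forces $qi\bar q=\pm|q|^2 i$; equivalently the unit quaternion $u=q/|q|$ commutes or anticommutes with $i$, which directly yields the two cases $q\in\C$ and $q\in\R j+\R k$. In line with the author's remark that the lemma is proved by a straight computation, I would present the coordinate version.
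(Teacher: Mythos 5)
Your proposal is correct and follows essentially the same route as the paper: expand $qi\bar q$ and $\bar q i q$ in coordinates, observe that membership in $\C$ forces $q_0q_2=q_0q_3=q_1q_2=q_1q_3=0$, and conclude the dichotomy $q\in\C$ or $q=(q_2+q_3i)j$. The bilinear identities you predict are exactly the ones the paper derives, so no further comparison is needed.
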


\begin{proof}
Let $q=q_0+q_1i+q_2j+q_3k$ for $q_i \in \R$, $i=1,2,3,4$. Then by a straight computation, we have
$$
qi\bar{q}=(q_0^2+q_1^2-q_2^2-q_3^2)i+2(q_0q_3+q_1q_2)j-2(q_0q_2-q_1q_3)k \in \C,
$$
$$
\bar{q}iq=(q_0^2+q_1^2-q_2^2-q_3^2)i-2(q_0q_3-q_1q_2)j+2(q_0q_2+q_1q_3)k \in \C.
$$
Hence,  if $qi\bar{q}$ and $\bar{q}iq$ are complex numbers,  it follows that $$q_0q_3=q_1q_2=q_0q_2=q_1q_3=0.$$ If $q \not \in \C$, then $q_0=q_1=0$ and thus the lemma follows.
\end{proof}

\section{Proof of the main Theorem}
Let $G$ be a non-elementary discrete subgroup of $\mathrm{Sp}(2,1)$ in which the sum of the diagonal entries of each element of $G$ is a complex number.
Let $A$ be a loxodromic element of $G$ fixing $0$ and $\infty$, $B$ be an arbitrary element in $G$. In terms of matrices,
we write $A$ and $B$ as \begin{eqnarray}\label{matrixform} A=\left[\begin{matrix} \lambda\mu & 0 & 0 \\ 0 & \nu & 0 \\ 0 & 0 & \frac{\mu}{\lambda} \end{matrix} \right], \ B=\left[\begin{matrix} a & b & c \\ d & e & f \\ g & h & l \end{matrix} \right],\end{eqnarray} where $\mu,\nu \in \mathrm{Sp}(1)$ and $\lambda >1$. For more detail, see \cite{Kim} or \cite{KP}. 


\begin{lemma}\label{lem:loxodromic}
The matrix $A$ of $G$ fixing $0$ and $\infty$ is an element of $\mathrm{U}(2,1)$. In other words, $\mu,\nu \in \mathrm{U}(1)$.
\end{lemma}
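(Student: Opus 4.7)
\medskip
\noindent\textbf{Proof plan.} The idea is to exploit the fact that every power $A^n$ lies in $G$, so that $\mathrm{tr}(A^n)\in\mathbb C$ for all $n\geq 1$, and then to show that this imposes so strong a constraint on $\mu$ that it must be complex.

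Use the decomposition $\mathbb H=\mathbb C\oplus\mathbb C j$ and write $\mu^n=\alpha_n+\xi_n j$ and $\nu^n=\gamma_n+\eta_n j$ with $\alpha_n,\xi_n,\gamma_n,\eta_n\in\mathbb C$. Since $\mu,\nu\in\mathrm{Sp}(1)$, one has $|\xi_n|,|\eta_n|\leq 1$. From the diagonal form \eqref{matrixform} of $A$,
\[
\mathrm{tr}(A^n)\;=\;(\lambda^n+\lambda^{-n})\,\mu^n+\nu^n,
\]
and the hypothesis that this trace is complex forces the $j$-component to vanish:
\[
(\lambda^n+\lambda^{-n})\,\xi_n+\eta_n\;=\;0.
\]
Hence the non-complex part of $\mu^n$ decays exponentially:
\[
|\xi_n|\;\leq\;\frac{1}{\lambda^n+\lambda^{-n}}\;\leq\;\lambda^{-n}.
\]

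Parameterize $\mu=\cos\theta+(\sin\theta)\,u$ with $\theta\in[0,\pi]$ and $u$ a unit purely imaginary quaternion, so that $\mu^n=\cos(n\theta)+\sin(n\theta)\,u$. Let $r$ denote the combined norm of the $j$- and $k$-coefficients of $u$; a short computation in the $\mathbb C\oplus\mathbb C j$ decomposition gives $|\xi_n|=r\,|\sin(n\theta)|$. If $r=0$ then $u=\pm i$ and $\mu\in\mathbb C$ already. Otherwise $|\sin(n\theta)|\leq r^{-1}\lambda^{-n}$ for every $n\geq 1$. A Kronecker-type dichotomy now forces $\sin(n\theta)=0$ for all $n$: if $\theta/\pi\notin\mathbb Q$, then $\{n\theta\bmod\pi\}$ is equidistributed, so $|\sin(n\theta)|$ is bounded below by, say, $1/2$ infinitely often, contradicting the exponential decay; if $\theta/\pi=p/q$ in lowest terms with $q>1$, then $|\sin(n\theta)|$ takes a fixed positive minimum value on the infinite set $\{n:q\nmid n\}$, again contradicting the decay. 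Hence $\theta\in\pi\mathbb Z$ and $\mu=\pm 1\in\mathbb C$.

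With $\mu\in\mathbb C$ established, the case $n=1$ of the trace identity reads $\nu=\mathrm{tr}(A)-(\lambda+\lambda^{-1})\mu$, a difference of two complex numbers; hence $\nu\in\mathbb C$ as well. Thus $\mu,\nu\in\mathrm{Sp}(1)\cap\mathbb C=\mathrm{U}(1)$ and $A\in\mathrm{U}(2,1)$. The only mildly delicate point is the Kronecker step; the rest is routine bookkeeping in the $\mathbb C\oplus\mathbb C j$ splitting.
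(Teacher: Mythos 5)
Your proof is correct, but it takes a genuinely different route from the paper's. The paper works with only the four traces $tr(A)$, $tr(A^2)$, $tr(A^3)$, $tr(A^4)$: it first extracts linear relations between the $j,k$-components of $\mu$ and $\nu$ from $tr(A)$ and $tr(A^2)$, then performs a fairly heavy explicit computation of the $j$-part of $tr(A^4)$ (using $|\mu|=|\nu|=1$) to force $\mu_0\mu_2=\mu_0\mu_3=0$, and finally rules out the purely imaginary case via $tr(A^3)$. You instead use all powers $A^n$ at once: the vanishing of the $\C j$-component of $tr(A^n)=(\lambda^n+\lambda^{-n})\mu^n+\nu^n$, combined with the a priori bound $|\eta_n|\le 1$ coming from $|\nu|=1$, gives exponential decay $|\xi_n|\le \lambda^{-n}$ of the non-complex part of $\mu^n$, and the de Moivre formula $\mu^n=\cos(n\theta)+\sin(n\theta)u$ converts this into $|\sin(n\theta)|\le r^{-1}\lambda^{-n}$, which the rational/irrational dichotomy for $\theta/\pi$ kills. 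All the individual steps check out: the identity $|\xi_n|=r|\sin(n\theta)|$ is right, the rational case does give a uniform positive lower bound on $|\sin(n\theta)|$ over the infinite set $\{n: q\nmid n\}$ because $\gcd(p,q)=1$, and density of the irrational rotation suffices for the other case (full equidistribution is not needed). What your approach buys is conceptual transparency and freedom from the messy polynomial identities; it would also adapt readily to higher-rank diagonal loxodromics where the dominant eigenvalue's coefficient can be isolated asymptotically. What the paper's approach buys is that it is entirely algebraic and uses only finitely many elements of $G$, so in principle it proves a slightly stronger statement (only the traces of $A, A^2, A^3, A^4$ need to be complex) and avoids any appeal to Kronecker/Weyl. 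One stylistic remark: your argument concludes $\mu=\pm 1$ in the case $r\neq 0$, which is consistent but worth flagging as the degenerate endpoint of that branch rather than the generic conclusion $\mu\in\mathrm U(1)$, which only arises in the $r=0$ branch.
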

\begin{proof}
For a matrix $X$, we denote by $tr(X)$ the sum of the diagonal entries of $X$. 
Let $\mu=\mu_0+\mu_1i+\mu_2j+\mu_3k$ and $\nu=\nu_0+\nu_1i+\nu_2j+\nu_3k$ for $\mu_t,\nu_t \in \bR$ where $t=0,1,2,3$.
Then $$tr(A)=\left(\lambda+1/\lambda\right)(\mu_0+\mu_1i+\mu_2j+\mu_3k)+(\nu_0+\nu_1i+\nu_2j+\nu_3k) \in \C,$$
and hence \begin{align}\label{eqn:lem3.1.1} (\lambda+1/\lambda)\mu_t+\nu_t=0 \text{ for }t=2,3.\end{align} Furthermore, considering
\begin{align*}
tr(A^2) & =(\lambda^2+1/\lambda^2)\mu^2+\nu^2\\ &= (\lambda^2+1/\lambda^2)(\mu_0^2-\mu_1^2-\mu_2^2-\mu_3^2+2\mu_0\mu_1i+2\mu_0\mu_2j+2\mu_0\mu_3k)\\
&+(\nu_0^2-\nu_1^2-\nu_2^2-\nu_3^2+2\nu_0\nu_1i+2\nu_0\nu_2j+2\nu_0\nu_3k) \in \C,
\end{align*}
we have that for $t=2,3$, \begin{align}\label{eqn:lem3.1.2} (\lambda^2+1/\lambda^2)\mu_0\mu_t+\nu_0\nu_t=\mu_t[(\lambda^2+1/\lambda^2)\mu_0-(\lambda+1/\lambda)\nu_0]=0.\end{align} 
If $\mu_2=\mu_3=0$,  Equation (\ref{eqn:lem3.1.1}) implies that $\nu_2=\nu_3=0$. Then $\mu, \nu \in \C$ and so $\mu, \nu \in \mathrm{U}(1)$.\\
From now on, we assume that $\mu_2 \ne 0$ or $\mu_3 \ne 0$. By Equation (\ref{eqn:lem3.1.2}),   \begin{eqnarray}\label{mu0nu0}(\lambda^2+1/\lambda^2)\mu_0-(\lambda+1/\lambda)\nu_0=0,\end{eqnarray} and we can write $$\displaystyle{\nu=\frac{\lambda^4+1}{\lambda(\lambda^2+1)}\mu_0+\nu_1i-\left(\lambda+\frac{1}{\lambda}\right)(\mu_2j+\mu_3k)}.$$
Now let us consider $A^4$. Then
\begin{align*}
tr(A^4) & =\left(\lambda^4+\frac{1}{\lambda^4}\right)\mu^4+\nu^4\\ &= \left(\lambda^4+\frac{1}{\lambda^4}\right)(\mu_0+\mu_1i+\mu_2j+\mu_3k)^4\\ &+\left(\frac{\lambda^4+1}{\lambda(\lambda^2+1)}\mu_0+\nu_1i-\left(\lambda+\frac{1}{\lambda}\right) (\mu_2j+\mu_3k)\right)^4 \in \C.
\end{align*}
Since $\mu,\nu \in \mathrm{Sp}(1)$, we get
\begin{align*}
& \mu_0^2+\mu_1^2+\mu_2^2+\mu_3^2=1, \\
& {|\nu|^2= \left(\frac{\lambda^4+1}{\lambda(\lambda^2+1)}\mu_0\right)^2+\nu_1^2+\left(\lambda+\frac{1}{\lambda}\right)^2(\mu_2^2+\mu_3^2)=1}.
\end{align*}
Using these identities and calculating the $j$-part of $tr(A^4)$, we have that 
$$\frac{4\mu_0\mu_2(\lambda^2-1)(\lambda^6-1)(4\lambda^2\mu_0^2-(\lambda^2+1)^2)}{\lambda^4(\lambda^2+1)^2}=0.$$
Since $\lambda >1$ and $0 \le \mu_0^2 < 1$, it follows that $\mu_0\mu_2=0$. By repeating the same argument for the $k$-part of $tr(A^4)$, one gets $\mu_0\mu_3=0$. Since $\mu_2 \ne 0$ or $\mu_3 \ne 0$, $\mu_0=0$ and hence $\nu_0=0$ by (\ref{mu0nu0}). That is, $\mu$ and $\nu$ are purely imaginary, and so $\bar{\mu}=-\mu$ and $\bar{\nu}=-\nu$. Since $|\mu|=|\nu|=1$, we know that $\mu^3=-\mu$ and $\nu^3=-\nu$. If we write $\displaystyle{\nu=\nu_1i-(\lambda+1/\lambda)(\mu_2j+\mu_3k)}$ as before, since 
$$
\displaystyle{tr(A^3)=\left(\lambda^3+\frac{1}{\lambda^3}\right)\mu^3+\nu^3=-\left(\lambda^3+\frac{1}{\lambda^3}\right)\mu-\nu \in \C},
$$
the $j$-part of $tr(A^3)$ is zero, i.e.,
$$
-\left(\lambda^3+\frac{1}{\lambda^3}\right)\mu_2+\left(\lambda+\frac{1}{\lambda}\right)\mu_2=-\left(\lambda+\frac{1}{\lambda}\right)\left(\lambda-\frac{1}{\lambda}\right)^2\mu_2=0.
$$
Since $\lambda>1$, we have $\mu_2=0$. Similarly, considering the $k$-part of $tr(A^3)$, we also get $\mu_3=0$. This contradicts to the assumption that  $\mu_2 \ne 0$ or $\mu_3 \ne 0$. Therefore $\mu_2=\mu_3=0$ and thus $\mu, \nu \in \mathrm{Sp}(1) \cap \C=\mathrm U(1)$.
\end{proof}

According to Lemma \ref{lem:loxodromic}, $A$ is written as
\begin{align*} A=\left[\begin{matrix} \lambda e^{i\theta} & 0 & 0 \\ 0 & e^{-2i\theta} & 0 \\ 0 & 0 & \frac{1}{\lambda}e^{i\theta} \end{matrix} \right], \ \lambda>1, \ \theta \in [0,2\pi).\end{align*}

\begin{lemma}\label{lemma:diagonal}
For any element $B \in G$, every diagonal entry of $B$  is a complex number.
\end{lemma}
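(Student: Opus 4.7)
The plan is to apply the hypothesis to the entire family $\{A^n B\}_{n\in\mathbb Z}\subset G$ and exploit the exponential growth of the eigenvalues of $A$. Since $A$ is diagonal,
\begin{equation*}
\mathrm{tr}(A^n B) = \lambda^n e^{in\theta}\,a + e^{-2in\theta}\,e + \lambda^{-n}e^{in\theta}\,l
\end{equation*}
for every $n\in\mathbb Z$, and by hypothesis this must lie in $\mathbb C$.

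To extract a useful complex identity, I would introduce the projection $\pi\colon\mathbb H\to\mathbb C$ that records the $(j,k)$-component: write each quaternion uniquely as $q=q'+q''j$ with $q',q''\in\mathbb C$ and set $\pi(q)=q''$. Then $q\in\mathbb C$ if and only if $\pi(q)=0$, and $\pi(\zeta q)=\zeta\,\pi(q)$ for every $\zeta\in\mathbb C$ (because $\zeta\cdot q''j=(\zeta q'')j$); that is, $\pi$ commutes with left multiplication by complex scalars. Since every coefficient of $a$, $e$, $l$ in the trace formula above is such a left multiplier, applying $\pi$ gives, with $\alpha:=\pi(a)$, $\eta:=\pi(e)$, $\gamma:=\pi(l)\in\mathbb C$,
\begin{equation*}
\lambda^n e^{in\theta}\alpha + e^{-2in\theta}\eta + \lambda^{-n}e^{in\theta}\gamma = 0 \quad\text{for all } n\in\mathbb Z.
\end{equation*}

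The crux is then a simple growth comparison. As $n\to+\infty$, the first term has modulus $\lambda^n|\alpha|$, whereas the other two are uniformly bounded by $|\eta|+|\gamma|$; this forces $\alpha=0$. The remaining identity reduces to $\eta=-\lambda^{-n}e^{3in\theta}\gamma$ for every $n\ge 0$, so $|\eta|=\lambda^{-n}|\gamma|$ for all such $n$; since $\lambda>1$ this is possible only when $\gamma=\eta=0$. Therefore $\pi(a)=\pi(e)=\pi(l)=0$, i.e.\ $a,e,l\in\mathbb C$, which is the desired conclusion.

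The only subtle point I anticipate is the bookkeeping around the $\pi$-projection: one must check that the complex factors appearing in $\mathrm{tr}(A^n B)$ act as \emph{left} multipliers on the diagonal entries, since a right complex multiplier picks up a conjugation under $\pi$ (as $j\zeta=\bar\zeta j$). Fortunately the diagonal entries of $A^n$ are placed on the left of $a$, $e$, $l$, so this obstacle does not actually arise and the argument is clean.
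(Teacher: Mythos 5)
Your proof is correct and follows essentially the same route as the paper: both exploit that $\mathrm{tr}(A^{n}B)\in\mathbb{C}$ for several powers of $A$, with the diagonal entries of $A^{n}$ acting as \emph{left} complex multipliers on $a$, $e$, $l$. The paper takes $n=0,1,-1$ and solves the resulting nonsingular $3\times 3$ complex linear system, whereas you take all $n$ and finish with a growth estimate after applying the projection $q=q'+q''j\mapsto q''$; the two finishing steps are interchangeable.
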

\begin{proof}
Let $B$ be the matrix $$B=\left[\begin{matrix} a & b & c \\ d & e & f \\ g & h & l \end{matrix} \right].$$
Since the sum of the diagonal entries of every element of $G$ is in $\C$, we have that 
\begin{align*}
& tr(B)=a+e+l \in \mathbb C, \\
& tr(AB)=\lambda e^{i\theta}a+e^{-2i\theta}e+\frac{e^{i\theta}}{\lambda}l \in \mathbb C, \\
& tr(A^{-1}B)=\frac{e^{-i\theta}}{\lambda}a+e^{2i\theta}e+\lambda e^{-i\theta}l \in  \mathbb C.
\end{align*}
Solving for $a,e,$ and $l$, we conclude that $a,e,l \in \C$. This shows that every element of $G$ has complex diagonal entries.
\end{proof}

\begin{lemma}\label{lem:3elements}
Let $A$, $B_1$ and $B_2$ be elements of $G$ and 
$$
A=\left[\begin{matrix} \lambda e^{i\theta} & 0 & 0 \\ 0 & e^{-2i\theta} & 0 \\ 0 & 0 & \frac{1}{\lambda}e^{i\theta}
\end{matrix}\right], B_1=\left[\begin{matrix} a_1 & b_1 & c_1 \\ d_1 & e_1 & f_1 \\ g_1 & h_1 & l_1
\end{matrix}\right], B_2=\left[\begin{matrix} a_2 & b_2 & c_2 \\ d_2 & e_2 & f_2 \\ g_2 & h_2 & l_2
\end{matrix}\right],
$$
for some $\lambda>1$, $\theta \in [0,2\pi)$. Then $b_1d_2, c_1g_2, d_1b_2, f_1h_2, g_1c_2, h_1f_2$ are all complex numbers. Furthermore
$b_1id_2, c_1ig_2, d_1ib_2, f_1ih_2, g_1ic_2, h_1if_2$ are complex numbers unless $\theta \equiv 0$ (mod $\frac{\pi}{2}$).
\end{lemma}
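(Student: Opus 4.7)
The plan is to apply Lemma \ref{lemma:diagonal} to the elements $B_1A^nB_2 \in G$ for every $n\in\mathbb Z$, which lets me control each diagonal entry individually rather than only their sum. Because $A^n$ is diagonal with entries $\alpha_1^n = \lambda^n e^{in\theta}$, $\alpha_2^n = e^{-2in\theta}$, $\alpha_3^n = \lambda^{-n}e^{in\theta}$, a direct matrix multiplication gives
\begin{align*}
(B_1A^nB_2)_{11} &= a_1\alpha_1^n a_2 + b_1\alpha_2^n d_2 + c_1\alpha_3^n g_2,\\
(B_1A^nB_2)_{22} &= d_1\alpha_1^n b_2 + e_1\alpha_2^n e_2 + f_1\alpha_3^n h_2,\\
(B_1A^nB_2)_{33} &= g_1\alpha_1^n c_2 + h_1\alpha_2^n f_2 + l_1\alpha_3^n l_2.
\end{align*}
Lemma \ref{lemma:diagonal} tells me both that each of these three entries lies in $\mathbb C$ and that the diagonal entries $a_i, e_i, l_i$ are already in $\mathbb C$, so after subtracting the diagonal--diagonal summands I am left with three families of constraints valid for every $n\in\mathbb Z$, of which the one coming from $(1,1)$ reads
\[
b_1\omega^{-2n}d_2 + \lambda^{-n}c_1\omega^n g_2 \in \mathbb C, \qquad \omega := e^{i\theta},
\]
with analogous relations $\lambda^n d_1\omega^n b_2 + \lambda^{-n}f_1\omega^n h_2 \in \mathbb C$ and $\lambda^n g_1\omega^n c_2 + h_1\omega^{-2n}f_2 \in \mathbb C$ coming from $(2,2)$ and $(3,3)$.

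Next I will exploit the decomposition $\mathbb H = \mathbb C \oplus \mathbb C j$, writing each off-diagonal entry in the form $q = q' + q''j$ with $q', q'' \in \mathbb C$ and using the commutation rule $jz = \bar z j$ for $z\in\mathbb C$. A short computation shows that if $b_1 = \beta_1+\beta_2 j$ and $d_2 = \delta_1+\delta_2 j$, then $b_1d_2 \in \mathbb C$ is equivalent to $\beta_1\delta_2+\beta_2\bar\delta_1=0$, while $b_1id_2 \in \mathbb C$ is equivalent to $\beta_1\delta_2-\beta_2\bar\delta_1=0$; requiring both is therefore the same as the separate vanishing $\beta_1\delta_2 = \beta_2\bar\delta_1 = 0$. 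The same equivalences hold verbatim for each of the other five pairs appearing in the lemma.

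Taking the $j$-component of the $(1,1)$ relation yields, for every $n\in\mathbb Z$,
\[
\omega^{-2n}\beta_1\delta_2 + \omega^{2n}\beta_2\bar\delta_1 + \lambda^{-n}\omega^n\gamma_1\eta_2 + \lambda^{-n}\omega^{-n}\gamma_2\bar\eta_1 = 0,
\]
where $c_1 = \gamma_1+\gamma_2 j$ and $g_2 = \eta_1+\eta_2 j$. This is a linear dependence over $\mathbb Z$ among four geometric sequences whose bases are $\omega^{\pm 2}$ and $\lambda^{-1}\omega^{\pm 1}$. The two pairs are separated by modulus ($1$ versus $\lambda^{-1}<1$), so a coincidence can only occur inside the pair $\{\omega^{-2}, \omega^2\}$, and precisely when $\omega^4=1$, i.e.\ $\theta \equiv 0 \pmod{\pi/2}$. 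By the standard linear independence of distinct complex exponentials on $\mathbb Z$, every coefficient then vanishes when $\theta\not\equiv 0\pmod{\pi/2}$, giving $\beta_1\delta_2 = \beta_2\bar\delta_1 = \gamma_1\eta_2 = \gamma_2\bar\eta_1 = 0$ and hence all four of $b_1d_2, b_1id_2, c_1g_2, c_1ig_2 \in \mathbb C$. In the degenerate case the first two terms collapse and one only recovers $\beta_1\delta_2+\beta_2\bar\delta_1 = 0$; the plain products $b_1d_2, c_1g_2$ still fall out, but the corresponding $i$-products need not. Repeating the same extraction on $(2,2)$ and $(3,3)$ yields the analogous conclusions for $d_1b_2, f_1h_2$ and $g_1c_2, h_1f_2$, with base sets $\{\lambda\omega^{\pm 1}, \lambda^{-1}\omega^{\pm 1}\}$ and $\{\lambda\omega^{\pm 1}, \omega^{\pm 2}\}$ respectively; the union of the three degeneracy loci in $\theta$ is exactly $\{\theta \equiv 0 \pmod{\pi/2}\}$.

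The main obstacle is the bookkeeping across the three diagonal-entry analyses: each produces its own four-base configuration with a different pattern of coincidences, and one must check at each degenerate value $\theta \in \{0, \pi/2, \pi, 3\pi/2\}$ modulo $2\pi$ both that the six plain products survive and that the failures among the six $i$-products are distributed in the way encoded by the single uniform exceptional condition $\theta \equiv 0 \pmod{\pi/2}$. Once the relevant complex exponentials are recognised as a Vandermonde-type independent system on $\mathbb Z$, the remaining arithmetic is routine.
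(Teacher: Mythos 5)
Your proposal is correct and follows the same skeleton as the paper's proof: form $B_1A^nB_2$, use Lemma \ref{lemma:diagonal} to see that each diagonal entry and each of $a_i,e_i,l_i$ lies in $\C$, subtract the known-complex summand, and solve the resulting linear relations for the six products and six $i$-products. The one genuine difference is in how the solvability is established. The paper keeps the four quaternionic unknowns $b_1d_2$, $b_1id_2$, $c_1g_2$, $c_1ig_2$ and a real $4\times 4$ system coming from $n=1,2,3,4$, and simply asserts that it can be solved unless $\theta\equiv 0\pmod{\pi/2}$ (the nonvanishing of the determinant is left implicit). You instead split $\H=\C\oplus\C j$, extract the $\C j$-component, and obtain a vanishing linear combination of the geometric sequences with bases $\omega^{\pm 2}$, $\lambda^{-1}\omega^{\pm 1}$ (and the analogous base sets for the $(2,2)$- and $(3,3)$-entries); Vandermonde independence of distinct bases then forces the coefficients to vanish, and the moduli $1$ versus $\lambda^{\mp 1}$ separate the pairs so that the only possible coincidences are $\omega^4=1$ or $\omega^2=1$. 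This makes the exceptional locus $\theta\equiv 0\pmod{\pi/2}$ and the survival of the plain products in the degenerate cases completely transparent, which is a worthwhile sharpening of the step the paper glosses over; your identification of $b_1d_2\in\C$ with $\beta_1\delta_2+\beta_2\bar\delta_1=0$ and of $b_1id_2\in\C$ with $\beta_1\delta_2-\beta_2\bar\delta_1=0$ checks out.
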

\begin{proof}
Since $B_1$ and $B_2$ are in $G$, we know that $a_1, e_1, l_1, a_2, e_2, l_2$ are all complex numbers by Lemma \ref{lemma:diagonal}. Consider the elements $B_1AB_2$, $B_1A^2B_2$, $B_1A^3B_2$, $B_1A^4B_2 \in G$ and then their $(1,1)$-entries:
$$
\lambda e^{i\theta}a_1a_2+b_1e^{-2i\theta}d_2+c_1e^{i\theta}g_2/\lambda,
$$
$$
\lambda^2 e^{2i\theta}a_1a_2+b_1e^{-4i\theta}d_2+c_1e^{2i\theta}g_2/\lambda^2,
$$
$$
\lambda^3 e^{3i\theta}a_1a_2+b_1e^{-6i\theta}d_2+c_1e^{3i\theta}g_2/\lambda^3,
$$
$$
\lambda^4 e^{4i\theta}a_1a_2+b_1e^{-8i\theta}d_2+c_1e^{4i\theta}g_2/\lambda^4.
$$
These are also all complex numbers by Lemma \ref{lemma:diagonal}. Since $a_1,a_2 \in \mathbb{C}$, the following are all complex numbers as well.
$$
\lambda \cos2\theta(b_1d_2)-\lambda \sin2\theta(b_1id_2)+\cos \theta(c_1g_2)+\sin \theta(c_1ig_2)=z_1,
$$
$$
\lambda^2 \cos4\theta(b_1d_2)-\lambda^2 \sin4\theta(b_1id_2)+\cos 2\theta(c_1g_2)+\sin 2\theta(c_1ig_2)=z_2,
$$
$$
\lambda^3 \cos6\theta(b_1d_2)-\lambda^3 \sin6\theta(b_1id_2)+\cos 3\theta(c_1g_2)+\sin 3\theta(c_1ig_2)=z_3,
$$
$$
\lambda^4 \cos8\theta(b_1d_2)-\lambda^4 \sin8\theta(b_1id_2)+\cos 4\theta(c_1g_2)+\sin 4\theta(c_1ig_2)=z_4.
$$
Solving for $b_1d_2,b_1id_2,c_1g_2,c_1ig_2$, we get that they are all complex numbers unless $\theta \equiv 0$ (mod $\frac{\pi}{2}$). It can be easily checked that $b_1d_2,c_1g_2$ are still complex numbers when $\theta \equiv 0$ (mod $\frac{\pi}{2}$). Hence we can conclude that $b_1d_2,c_1g_2$ are complex numbers.
Similarly, from the other diagonal entries of $B_1AB_2$, $B_1A^2B_2$, $B_1A^3B_2$, $B_1A^4B_2 \in G$, one can show the Lemma.
\end{proof}

Applying  Lemma \ref{lem:3elements} to $B_1=B_2=B$ and $B_1=B,B_2=B^{-1}$(or $B_2=B,B_1=B^{-1}$), we immediately have the following corollary.
\begin{corollary}\label{cor1}
Let $B$ be the matrices written in (\ref{matrixform}). Then,
\begin{itemize}
\item[(a)] $bd$, $db$, $fh$, $hf$, $cg$, $gc$, $b\bar{h}$, $f\bar{d}$, $c\bar{g}$, $\bar{h}b$, $\bar{d}f$, $\bar{g}c \in \C$.
\item[(b)] $bid$, $dib$, $fih$, $hif$, $cig$, $gic$, $bi\bar{h}$, $fi\bar{d}$, $ci\bar{g}$, $\bar{h}ib$, $\bar{d}if$, $\bar{g}ic \in \C$ unless $\theta \equiv 0$ (mod $\frac{\pi}{2}$).
\end{itemize}
\end{corollary}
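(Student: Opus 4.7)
The plan is to apply Lemma \ref{lem:3elements} three times, for the three pairs $(B_1,B_2) \in \{(B,B),(B,B^{-1}),(B^{-1},B)\}$ indicated in the sentence just before the corollary.

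Taking $B_1 = B_2 = B$ immediately specializes the twelve outputs of Lemma \ref{lem:3elements} to products of entries of $B$ with itself, which at once gives the six ``same-matrix'' items $bd$, $db$, $cg$, $gc$, $fh$, $hf \in \C$ in part (a), and analogously the six $i$-inserted items $bid$, $dib$, $cig$, $gic$, $fih$, $hif$ in part (b) under the additional hypothesis that $\theta \not\equiv 0$ (mod $\pi/2$).

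For the remaining mixed-entry items I would use the explicit inverse
$$B^{-1} = \left[\begin{matrix} \bar l & \bar f & \bar c \\ \bar h & \bar e & \bar b \\ \bar g & \bar d & \bar a \end{matrix}\right]$$
already recorded earlier in the section. Plugging $(B_1,B_2) = (B, B^{-1})$ into Lemma \ref{lem:3elements} turns the six products $b_1 d_2, d_1 b_2, c_1 g_2, g_1 c_2, f_1 h_2, h_1 f_2$ into $b\bar h, d\bar f, c\bar g, g\bar c, f\bar d, h\bar b$, all in $\C$, while the opposite ordering $(B_1,B_2) = (B^{-1}, B)$ produces $\bar f d, \bar h b, \bar c g, \bar g c, \bar b h, \bar d f \in \C$. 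Every mixed-entry item listed in part (a) of the corollary appears (possibly after the quaternionic conjugation identity $\overline{pq} = \bar q \bar p$) among these twelve outputs, completing (a); part (b) follows identically by invoking the second clause of Lemma \ref{lem:3elements} for the same three pairs.

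I do not anticipate any genuine obstacle here: the corollary is essentially a bookkeeping consequence of Lemma \ref{lem:3elements} together with the explicit formula for $B^{-1}$. The only care required is to distinguish noncommuting products such as $b\bar h$ and $\bar h b$, and to verify that each of the twelve items listed in each of (a) and (b) does indeed appear, possibly after conjugation, among the twelve outputs of the three applications above.
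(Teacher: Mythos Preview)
Your proposal is correct and matches the paper's own argument essentially verbatim: the paper simply states that applying Lemma \ref{lem:3elements} with $B_1=B_2=B$ and with $B_1=B,\ B_2=B^{-1}$ (or the reverse order) immediately yields the corollary, and your expanded bookkeeping confirms this. In fact each of the twelve items in (a) appears directly among the outputs of these three applications without even needing the conjugation identity $\overline{pq}=\bar q\,\bar p$.
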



Let $B$ be an arbitrary element of $G$ which are not a power of $A$. 
Let $B$ be the matrix in (\ref{matrixform}). Suppose that $cg=0$.
Then $B$ fixes either $0$ or $\infty$ (see Remark \ref{remark}). This means that $A$ and $B$ have a common fixed point. However this is impossible since $G$ is discrete. Hence $cg\neq 0$.

\subsection{The $bd\neq 0$ case}

We will first deal with the case that there exists an element $B$ of $G$ with $bd \neq 0$. Throughout this section, we assume that $bd\neq 0$.
As seen in Corollary \ref{cor1}, both $bd$ and $db$ are complex numbers. Applying Proposition \ref{prop:commuting} for $b$ and $d$, one of the following holds.
\begin{itemize}
\item[(i)] $b$, $d \in \C$
\item[(ii)] $b$ and $d$ are of the form $b=b_{*}j$ and $d=d_{*}j$, where $b_{*}, d_{*} \in \C$
\item[(iii)] $d=r\bar{b}$ for some $r \in \R-\{0\}$
\end{itemize}
We will consider these cases separately as follow. 

\begin{case}
Suppose that $b, d \in \C$.
Since $b\bar{h}, \bar{h}b \in \C$ and $b$ is nonzero, $h \in \C$. Similarly, since $f\bar{d}, \bar{d}f \in \C$ and $d$ is nonzero, $f \in \C$. Then, from Equation (2) and (13), it can be seen that $c$ and $g$ are also complex numbers. 
Thus every entry of $B$ is a complex number and hence $B \in \mathrm{U}(2,1)$. 

Let $B'$ be any other element of $G$ which is not a power of $A$. Let $$B'=\left[\begin{matrix} a' & b' & c' \\ d' & e' & f' \\ g' & h' & l' \end{matrix} \right].$$ Then, by Lemma \ref{lemma:diagonal}, we know that $a',e',l' \in \C$. 
Furthermore, by Lemma \ref{lem:3elements}, we have that $$bd',db',b\bar{h'},f'\bar{d},cg',gc' \in \C.$$
Since $b$ and $d$ are nonzero complex numbers, it follows that $b',d',h',f' \in \C$. 
Moreover from Equation (2) and (13), it follows that $c', g' \in \C$. Thus every entry of $B'$ is a complex number, i.e. $B'\in \mathrm U(2,1)$. Therefore $G$ is a subgroup of $\mathrm{U}(2,1)$, which preserves a copy of $\ch{2}$ in $\mathbf H_{\mathbb H}^2$.
\end{case}

\begin{case} Now we suppose that $b=b_{*}j$ and $d=d_{*}j$ for some $b_*, d_* \in \C$. 
Since $b\bar{h}, \bar{h}b \in \C$ by Corollary \ref{cor1} and $b$ is nonzero, $h=h_{*}j$ for some $h_{*} \in \C$. In the same way, since $f\bar{d}, \bar{d}f \in \C$ and $d$ is nonzero, $f=f_{*}j$ for some $f_{*} \in \C$. 
Furthermore, by Equation (2), we have that $$a\bar f +b \bar e + c \bar d = -af_*j+b_*j \bar e -cd_*j.$$
Since $e$ is a complex number, $j\bar{e}=ej$. Hence 
$$-af_*j+b_*j \bar e-cd_*j=(-af_*+b_*e-cd_*)j=0.$$
Due to $d_* \neq 0$, we conclude that $c$ is a complex number.
Similarly, from Equation (13), it can be derived that $g \in \C$. To summarize, $a,e,l,c,g \in \C$ and $b,d,f,h$ are of the form  $q_{*}j$ where $q_{*} \in \C$. Then, for $z_1, z_2 \in \C$,
$$
A\left[\begin{matrix} z_1 \\ z_2j \\ 1 \end{matrix} \right]=\left[\begin{matrix} \lambda e^{i\theta}z_1 \\ e^{-2i\theta}z_2j \\ \frac{1}{\lambda}e^{i\theta} \end{matrix} \right] \sim \left[\begin{matrix} z_1' \\ z_2'j \\ 1 \end{matrix} \right]
$$
for some $z_1', z_2' \in \C$ and
\begin{align*}
B\left[\begin{matrix} z_1 \\ z_2j \\ 1 \end{matrix} \right] &=\left[\begin{matrix} a & b_{*}j & c \\ d_{*}j & e & f_{*}j \\ g & h_{*}j & l \end{matrix} \right]\left[\begin{matrix} z_1 \\ z_2j \\ 1 \end{matrix} \right]=\left[\begin{matrix} az_1+b_{*}jz_2j+c \\ d_{*}jz_1+ez_2j+f_{*}j \\ gz_1+h_{*}jz_2j+l \end{matrix} \right]\\ &=\left[\begin{matrix} az_1-b_{*}\bar{z_2}+c \\ (d_{*}\bar{z_1}+ez_2+f_{*})j \\ gz_1-h_{*}\bar{z_2}+l \end{matrix} \right] \sim \left[\begin{matrix} z_1'' \\ z_2''j \\ 1 \end{matrix} \right]
\end{align*}
for some $z_1'', z_2'' \in \C$. Note that $\left[\begin{matrix} z_1 & z_2j & 1 \end{matrix} \right]=\left[\begin{matrix} z_1 & j\bar{z_2} & 1 \end{matrix} \right]$ for $z_1,z_2 \in \C$.
Hence $A$ and $B$ leave invariant a copy of $\ch{2}$ of polar vectors $\left[\begin{matrix} z_1 & jz_2 & 1 \end{matrix} \right]^t$, where $z_1,z_2 \in \C$.
 
Let $B'$ be any other element of $G$ which are not a power of $A$ and let \begin{align*} B'=\left[\begin{matrix} a' & b' & c' \\ d' & e' & f' \\ g' & h' & l' \end{matrix} \right].\end{align*} Then, $a',e',l' \in \C$ by Lemma \ref{lemma:diagonal}. Applying Lemma \ref{lem:3elements} to $B$ and $B'$, one can conclude that $bd'$, $d'b$, $b'd$, $db' \in \C$. Then, by Proposition \ref{prop:commuting}, $b'$ and $d'$ are of the form $q_{*}j$ where $q_{*} \in \C$ since $b, d$ are of the form $q_*j$ for $q_*\in \C$. By a similar argument, one can show that $f'$ and $h'$ are of the same form $q_*j$. Moreover $c',g' \in \C$ because $cg',gc' \in \C$ by Lemma \ref{lem:3elements} and $cg\neq 0$. Therefore $B'$ is of the same form as $B$ and we conclude that every element of $G$ preserves a copy of $\ch{2}$ consisting of $\left[\begin{matrix} z_1 & jz_2 & 1 \end{matrix} \right]^t$, where $z_1,z_2 \in \C$.
\end{case}

\begin{case}
Lastly suppose that $d=r\bar{b}$.
To avoid repetition, we will assume that $b$ and $d$ are neither complex numbers nor of the form $b=b_*j$ and $d=d_*j$ for $b_*,d_* \in \C$.
Let $d=r_1\bar{b}$ for some $r_1 \in \mathbb{R}-\{0\}$. By Corollary \ref{cor1}, we have that $f\bar{d}$, $\bar{d}f$, $ b\bar{h}$, $\bar{h}b \in \C$. Applying Proposition \ref{prop:commuting} to $f,\bar{d}$ and $b,\bar{h}$ respectively, it can be easily seen that $f=r_2'd=r_2'r_1\bar b=r_2 \bar b$ and $h=r_3b$ for some $r_2,r_3 \in \mathbb{R}-\{0\}$. From (5) and (14), 
$$2r_1r_2|b|^2=2r_3|b|^2=1-|e|^2,$$ and thus, $$\displaystyle{r_3=r_1r_2=\frac{1-|e|^2}{2|b|^2}} \text{ and } h=r_1r_2b.$$ Moreover using (2), (4), (11), (13), we have the following equations:
\begin{align*}
& r_2ab+r_1cb+b\bar{e}=0, & & r_1lb+r_2gb+r_1r_2b\bar{e}=0,  \\
&r_1r_2\bar{c}b+\bar{l}b+r_2be=0,& &  r_1r_2\bar{a}b+\bar{g}b+r_1be=0.
\end{align*}
These equations are written as 
\begin{align*}
& -r_1r_2b\bar{e}=r_1r_2^2ab+r_1^2r_2cb=r_1lb+r_2gb, \\
& -r_1r_2be=r_1^2r_2\bar{c}b+r_1\bar{l}b=r_1r_2^2\bar{a}b+r_2\bar{g}b.
\end{align*}
Since $b \ne 0$, the above equations are simplified in the following way.
\begin{align*}
& r_1r_2^2a+r_1^2r_2c=r_1l+r_2g, \\
& r_1r_2^2a-r_1^2r_2c=r_1l-r_2g.
\end{align*}
Hence we finally get that $r_1r_2^2a=r_1l$ and $r_1^2r_2c=r_2g$, i.e. $l=r_2^2a, g=r_1^2c$ since $r_1,r_2 \ne 0$. Now $B$ is written as 
\begin{align} B=\left[\begin{matrix} a & b & c \\ r_1\bar{b} & e & r_2\bar{b} \\ r_1^2c & r_1r_2b & r_2^2a \end{matrix} \right], \text{ where } a,e \in \C \text{ and } r_1,r_2 \in \R-\{0\}.\end{align}  
Since $cg=r_1^2c^2 \in \C$ by Corollary 3.4, either $c \in \C$ or $c$ is purely imaginary.\\
Before considering these two cases, note that $\theta \equiv 0$ (mod $\frac{\pi}{2}$). If $\theta \not\equiv 0$ (mod $\frac{\pi}{2}$), then $bid=r_1bi\bar{b} \in \C$ and $dib=r_1\bar{b}ib \in \C$ by Corollary \ref{cor1}. Hence, by Lemma \ref{lem:aia}, either $b \in \C$ or $b=b_*j$ for $b_* \in \C$, which contradicts to our assumption.

\vspace{3mm}

\noindent {\bf Case 3.1 : $c\in \C$}

\vspace{3mm}

From (11), $(r_2\bar{a}+r_1\bar{c})b+be=0$ and thus we have that $$\displaystyle{e=-\frac{\bar{b}(r_2\bar{a}+r_1\bar{c})b}{|b|^2} \in \C}.$$ Writing $a=a_0+a_1i, c=c_0+c_1i$ and $b=b_0+b_1i+b_2j+b_3k$, the $j$-part and the $k$-part of $-\bar{b}(r_2\bar{a}+r_1\bar{c})b$ are       
$2(r_2a_1+r_1c_1)(b_1b_2-b_0b_3)$ and $2(r_2a_1+r_1a_1)(b_0b_2+b_1b_3)$ respectively, and they should be zero for $e \in \C$.\\

\smallskip

\noindent {\bf Claim} : $e \in \R$.
\begin{proof}[Proof of the claim]
It is sufficient to prove that $r_2\bar{a}+r_1\bar{c}$ is a real number.
Suppose not, i.e. $\mathrm{Im}(r_2\bar{a}+r_1\bar{c})=-(r_2a_1+r_1c_1) \ne 0$. Then by the above argument, \begin{align}\label{2eqn} b_1b_2=b_0b_3 \text{ and } b_0b_2=-b_1b_3.\end{align} 
If $b_2=0$, then $b_3 \ne 0$ since $b \not\in \C$. Then it follows from (\ref{2eqn}) that $b_0=b_1=0$. However, this contradicts to the assumption that $b$ is not of the form $b_*j$, where $b_* \in \C$. Hence $b_2 \ne 0$. In a similar way, it can be easily shown that $b_3 \ne 0$.
If $b_0=0$, then $b_1 \ne 0$ since $b$ is not of the form $b_*j$ where $b_* \in \C$ and then $b_2=b_3=0$ by (\ref{2eqn}), i.e. $b \in \C$. Again this contradicts to the assumption that $b$ is a complex number. Hence $b_0 \ne 0$. In a similar way, it can be seen  that $b_1 \ne 0$.
Therefore $b_0,b_1,b_2,b_3$ are all nonzero. From (\ref{2eqn}), it holds that $b_0b_1b_2^2=-b_0b_1b_3^2$ and thus $b_2^2=-b_3^2$. However it is impossible for any nonzero real numbers $b_2, b_3$.
Therefore we conclude that $\mathrm{Im}(r_2\bar{a}+r_1\bar{c})=0$ and thus $r_2\bar{a}+r_1\bar{c}$ is a real number, which completes the proof.
\end{proof}

Since $\theta \equiv 0$ (mod $\frac{\pi}{2}$) as mentioned before, $e^{-2i\pi} \in \R$.
Therefore, for any $z_1,z_2,z_3 \in \C$, $$A\left[\begin{matrix} z_1 \\ \bar{b}z_2 \\ z_3 \end{matrix}\right]=\left[\begin{matrix} \lambda e^{i\theta} & 0 & 0 \\ 0 & e^{-2i\theta} & 0 \\ 0 & 0 & \frac{1}{\lambda}e^{i\theta} \end{matrix} \right] \left[\begin{matrix} z_1 \\ \bar{b}z_2 \\ z_3 \end{matrix}\right]= \left[\begin{matrix} z'_1 \\ \bar{b}z'_2 \\ z'_3 \end{matrix}\right]$$ for some $z'_1,z'_2,z'_3 \in \C$. In addition,
\begin{align*}
B\left[\begin{matrix} z_1 \\ \bar{b}z_2 \\ z_3 \end{matrix} \right] &=\left[\begin{matrix} a & b & c \\ r_1\bar{b} & e & r_2\bar{b} \\ r_1^2c & r_1r_2b & r_2^2a \end{matrix} \right]\left[\begin{matrix} z_1 \\ \bar{b}z_2 \\ z_3 \end{matrix} \right]\\ &=\left[\begin{matrix} az_1+|b|^2z_2+cz_3 \\ \bar{b}(r_1z_1+ez_2+r_2z_3) \\ r_1^2cz_1+r_1r_2|b|^2z_2+r_2^2az_3 \end{matrix} \right] = \left[\begin{matrix} z_1'' \\ \bar{b}z_2'' \\ z_3'' \end{matrix} \right]
\end{align*}
for some $z''_1,z''_2,z''_3 \in \C$.

Let $B'$ be any other element of $G$ which are not a power of $A$. By applying Lemma \ref{lem:3elements} to $B$ and $B'$ as in the previous case, one can check that $B'$ has the same form as $B$, i.e. 
\begin{align*} B'=\left[\begin{matrix} a' & b' & c' \\ r_3\bar{b'} & e' & r_4\bar{b'} \\ r_3^2c' & r_3r_4b' & r_4^2a' \end{matrix} \right], \text{ where } a', c' \in \C, e' \in \R \text{ and } r_3,r_4 \in \R-\{0\}.\end{align*} 
Then, considering the diagonal entries of $B'B$, it follows that 
\begin{align*} & a'a+r_1b'\bar{b}+r_1^2c'c \in \C, \\ & r_1\bar{b}b'+ee'+r_2r_3r_4\bar{b}b' \in \C. \end{align*}
Since $a,a',c,c' \in \C$, $e, e' \in \R$ and $r_1, r_2, r_3, r_4 \ne 0$, we have that $b'\bar{b} \in \C$ and $\bar{b}b' \in \C$. Applying Proposition \ref{prop:commuting} for $\bar{b}$ and $b'$, we have $b'=rb$ for some $r \in \R$
since $b$ is neither a complex number nor of the form $b=b_*j$ for some $b_* \in \C$.
Hence $$B'=\left[\begin{matrix} a' & rb & c' \\ r_3r\bar{b} & e' & r_4r\bar{b} \\ r_3^2c' & r_3r_4rb & r_4^2a' \end{matrix} \right].$$ Then it is easy to see that $B'$ also preserves a copy of $\ch{2}$ of polar vectors $\left[\begin{matrix} z_1 & \bar{b}z_2 & z_3 \end{matrix}\right]^t$. Therefore we conclude that $G$ preserves a copy of $\ch{2}$ of polar vectors $\left[\begin{matrix} z_1 & \bar{b}z_2 & z_3 \end{matrix}\right]^t$, where $z_1,z_2,z_3 \in \C$.

\vspace{3mm}

\noindent {\bf Case 3.2 : $c$ is purely imaginary}

\vspace{3mm}

Now we suppose that the previous case does not happen for any element of $G$.
Hence assume that $c$ is not a complex number.

\vspace{3mm}

\noindent {\bf Claim}. $r_2=-1$.

\begin{proof}[Proof of Claim]
Putting $a=a_0+a_1i$ and $c=c_1i+c_2j+c_3k$, the identity $a\bar{c}+|b|^2+c\bar{a} =0$ of (3) implies that 
$$|b|^2+2a_1c_1=0.$$
By a straight computation, the $(1,2)$-entry of $B^2$ is 
$$ab+be+r_1r_2cb.$$ 
From Equation (11), we have that $be=-r_2\bar a b-r_1\bar c b=-r_2\bar a b+r_1 c b$. The last equation follows from the assumption that $c$ is purely imaginary. Then the $(1,2)$-entry of $B^2$ is written as $$ab+be+r_1r_2cb=(a-r_2 \bar a+r_1(r_2+1)c)b.$$
Note that $a$ is a complex number, $c$ is purely imaginary and $b\neq 0$. Hence if $r_2 \neq -1$, the $(1,2)$-entry of $B^2$ can never be zero. In a similar way, one can see that the $(2,1)$-entry of $B^2$ is also nonzero.
Hence the $(1,3)$-entry of $B^2$ must be purely imaginary but not a complex number since we assume that the previous case does not happen. The $(1,3)$-entry of $B^2$ is $ac+r_2|b|^2+r_2^2ca$ and hence its real part is computed as 
$$2\mathrm{Re}(ac+r_2|b|^2+r_2^2ca)=2(-a_1c_1+r_2|b|^2-r_2^2a_1c_1)=|b|^2(r_2+1)^2=0.$$
Since $|b|\neq 0$, it follows that $r_2=-1$. This contradicts to the assumption that $r_2 \neq -1$. Therefore we conclude that $r_2=-1$.
\end{proof}

Now $B$ is written as \begin{align}\label{Bmatrixform} B=\left[\begin{matrix} a & b & c \\ r_1\bar{b} & e & -\bar{b} \\ r_1^2c & -r_1b & a \end{matrix} \right], \text{ where } a,e \in \C \text{ and } r_1\in \R-\{0\}.\end{align}  
We look at the matrix $BA$.
\begin{align*}
BA & =\left[\begin{matrix} a & b & c \\ r_1\bar{b} & e & -\bar{b} \\ r_1^2c & -r_1b & a \end{matrix} \right]\left[\begin{matrix} \lambda e^{i\theta} & 0 & 0 \\ 0 & e^{-2i\theta} & 0 \\ 0 & 0 & \frac{1}{\lambda}e^{i\theta} \end{matrix} \right] \\
&=\left[\begin{matrix} \lambda ae^{i\theta} & be^{-2i\theta} & ce^{i\theta}/\lambda \\ \lambda r_1\bar{b} e^{i\theta} & ee^{-2i\theta} & -\bar{b}e^{i\theta}/\lambda \\ \lambda r_1^2ce^{i\theta} & -r_1be^{-2i\theta} & ae^{i\theta}/\lambda \end{matrix} \right].
\end{align*}

Since $\theta \equiv 0$ (mod $\frac{\pi}{2}$) and so $e^{-2i\theta} \in \mathbb R$, the $(1,2)$-entry of $BA$ is neither a complex number nor of the form $q_*j$ for $q_*\in \C$. Hence $BA$ is of the same form as $B$ in (\ref{Bmatrixform}). Then the modulus of the $(1,2)$-entry of $BA$ should equal to the modulus of the $(2,3)$-entry of $BA$. Hence, we have that
$$|be^{-2i\theta}|=\left|\frac{-\bar b e^{i\theta}}{\lambda}\right|, \ |b|=\frac{|b|}{\lambda} \text{ and so }\lambda=1.$$
However, this contradicts to the assumption that $\lambda >1$. Therefore the case that $c$ is purely imaginary and not a complex number can never happen.
\end{case}

\subsection{The $bd=0$ case}
We look at the case that there exists an element $B$ of $G$ with $bd \ne 0$ so far. From now on, we consider the remaining case that every element of $G$ satisfies $bd=0$.
If $bd=0$, by considering $B^{-1}$, we also have $fh=0$. Then, using the identities (1)--(18), it can be easily checked that $b=d=f=h=0$. For example, if $b=f=0$, by (6), $d=0$ because $c \ne 0$. Then, by (15), $h=0$. Therefore every element of $G$ is of the form
$$\left[\begin{matrix} a & 0 & c \\ 0 & e & 0 \\ g & 0 & l \end{matrix} \right], \text{ where }a,e,l \in \C.$$
Applying Proposition \ref{prop:commuting} for $c$ and $g$, since $c, g \ne 0$, one of the following holds.
\begin{itemize}
\item[(i)] $c$, $g \in \C$
\item[(ii)] $c$ and $g$ are of the form $c=c_{*}j$ and $g=g_{*}j$ where $c_{*}, g_{*} \in \C$
\item[(iii)] $g=r\bar{c}$ for some $r \in \R-\{0\}$
\end{itemize}

First, if $c$, $g \in \C$, then $B \in \mathrm{U}(2,1)$. For any other element \begin{align}\label{B'} B'=\left[\begin{matrix} a' & 0 & c' \\ 0 & e' & 0 \\ g' & 0 & l' \end{matrix} \right] \in G, \text{ where }a',e',l' \in \C, \end{align} $c',g' \in \C$ since $cg',gc' \in \C$ by Lemma \ref{lem:3elements}. Hence $B' \in \mathrm{U}(2,1)$. This implies that $G$ is a subgroup of $\mathrm{U}(2,1)$.

Second, if $c$ and $g$ are of the form $c=c_{*}j$ and $g=g_{*}j$ where $c_{*}, g_{*} \in \C$, for $z_1,z_2,z_3 \in \C$,
\begin{align*}
& A\left[\begin{matrix} z_1j \\ z_2 \\ z_3 \end{matrix} \right] =\left[\begin{matrix} \lambda e^{i\theta} & 0 & 0 \\ 0 & e^{-2i\theta} & 0 \\ 0 & 0 & \frac{1}{\lambda}e^{i\theta}
\end{matrix}\right]\left[\begin{matrix} z_1j \\ z_2 \\ z_3 \end{matrix} \right] =\left[\begin{matrix} \lambda e^{i\theta}z_1j \\ e^{-2i\theta}z_2 \\ \frac{1}{\lambda}e^{i\theta}z_3 \end{matrix} \right]=\left[\begin{matrix} z_1'j \\ z_2' \\ z_3' \end{matrix} \right],\\
& B\left[\begin{matrix} z_1j \\ z_2 \\ z_3 \end{matrix} \right]=\left[\begin{matrix} a & 0 & c \\ 0 & e & 0 \\ g & 0 & l \end{matrix} \right]\left[\begin{matrix} z_1j \\ z_2 \\ z_3 \end{matrix} \right]=\left[\begin{matrix} az_1j+cz_3 \\ ez_2 \\ gz_1j+lz_3 \end{matrix} \right] =\left[\begin{matrix} z_1''j \\ z_2'' \\ z_3'' \end{matrix} \right],
\end{align*}
for some $z_1',z_2',z_3',z_1'',z_2'',z_3'' \in \C$. Hence, $A$ and $B$ leave invariant a copy of $\ch{2}$ of polar vectors $\left[\begin{matrix} z_1j & z_2 & z_3 \end{matrix} \right]^t$, where $z_1,z_2,z_3 \in \C$.

For any other element $B'\in G$ of the form (\ref{B'}), since $cg',gc' \in \C$ by Lemma \ref{lem:3elements}, $c',g'$ are also of the form $c'=c'_*j$, $g'=g'_*j$ for $c'_*$, $g'_* \in \C$. Therefore, every element of $G$ preserves a copy of $\ch{2}$ of polar vectors $\left[\begin{matrix} z_1j & z_2 & z_3 \end{matrix} \right]^t$, where $z_1,z_2,z_3 \in \C$.

Lastly, in the case that $g=r\bar{c}$ for some $r \in \R-\{0\}$, we assume that neither $c \in \C$ nor $c$ is of the form $c=c_*j$ for $c_* \in \C$ to avoid repetition. From (1), we have that $a\bar{l}+rc^2=1$ and so $c^2 \in \C$. Then $c$ should be purely imaginary because $c \not\in \C$. By (3), we have $\mathrm{Re}(ca)=0$, so $a \in \R$. Then, for $z_1,z_2,z_3 \in \C$,
\begin{align*}
B\left[\begin{matrix} cz_1 \\ z_2 \\ z_3 \end{matrix} \right] =\left[\begin{matrix} a & 0 & c \\ 0 & e & 0 \\ r\bar{c} & 0 & l \end{matrix} \right]\left[\begin{matrix} cz_1 \\ z_2 \\ z_3 \end{matrix} \right]=\left[\begin{matrix} c(az_1+z_3) \\ ez_2 \\ r|c|^2z_1+lz_3 \end{matrix} \right] =\left[\begin{matrix} cz_1' \\ z_2' \\ z_3' \end{matrix} \right],
\end{align*}
for some $z_1',z_2',z_3' \in \C$.

\vspace{3mm}
 
\noindent {\bf Claim}. $\theta \equiv 0$ (mod $\pi$).
\begin{proof}[Proof of Claim]
The $(1,1)$-entry of $BAB$ is a complex number, i.e.
$$\lambda e^{i\theta}a^2+\frac{rce^{i\theta}\bar{c}}{\lambda} \in \C.$$
Since $\lambda e^{i\theta}a^2 \in \C$, $ce^{i\theta}\bar{c}=|c|^2\cos\theta-(cic)\sin\theta \in \C$. Then, if $\theta \not\equiv 0$ (mod $\pi$), $cic \in \C$. Then, by Lemma \ref{lem:aia}, either $c \in \C$ or $c=c_*j$ for $c_* \in \C$. This contradicts to our assumption. Thus $\theta \equiv 0$ (mod $\pi$).
\end{proof}
Due to the claim above, $A$ is written as $$A=\left[\begin{matrix} \pm \lambda & 0 & 0 \\ 0 & 1 & 0 \\ 0 & 0 & \pm\frac{1}{\lambda}
\end{matrix}\right].$$Then, for $z_1,z_2,z_3 \in \C$,
\begin{align*}
A\left[\begin{matrix} cz_1 \\ z_2 \\ z_3 \end{matrix} \right] =\left[\begin{matrix} \pm c\lambda z_1 \\ z_2 \\ \pm\frac{1}{\lambda}z_3 \end{matrix} \right]=\left[\begin{matrix} cz_1' \\ z_2' \\ z_3' \end{matrix} \right],
\end{align*}
for some $z_1',z_2',z_3' \in \C$.

Thus $A$ and $B$ leave invariant a copy of $\ch{2}$ of polar vectors $\left[\begin{matrix} cz_1 & z_2 & z_3 \end{matrix} \right]^t$, where $z_1,z_2,z_3 \in \C$.
For any other element $B'\in G$ of the form (\ref{B'}), by Lemma \ref{lem:3elements}, $cg' \in \C$ and $g'c \in \C$. Since $c$ is purely imaginary, Proposition \ref{prop:commuting} implies that $g'=r'c$ for some $r' \in \R-\{0\}$ and $g'$ is purely imaginary. Since $c'g', g'c' \in \C$ by Corollary \ref{cor1}, $c'=r''g'$ for some $r''\in \R-\{0\}$ by Proposition \ref{prop:commuting}. Also, by a similar argument as above, we also have $a' \in \R$ using (3). Therefore, $B'$ is written as $$B'=\left[\begin{matrix} a' & 0 & r'r''c \\ 0 & e' & 0 \\ r'c & 0 & l' \end{matrix} \right],$$ where $e',l' \in \C$, $a' \in \R$, $r',r'' \in \R-\{0\}$, and $c$ is purely imaginary. Then, for $z_1,z_2,z_3 \in \C$,
\begin{align*}
B'\left[\begin{matrix} cz_1 \\ z_2 \\ z_3 \end{matrix} \right] =\left[\begin{matrix} a' & 0 & r'r''c \\ 0 & e' & 0 \\ r'c & 0 & l' \end{matrix} \right]\left[\begin{matrix} cz_1 \\ z_2 \\ z_3 \end{matrix} \right]=\left[\begin{matrix} c(a'z_1+r'r''z_3) \\ e'z_2 \\ -r'|c|^2z_1+l'z_3 \end{matrix} \right] =\left[\begin{matrix} cz_1' \\ z_2' \\ z_3' \end{matrix} \right],
\end{align*}
for some $z_1',z_2',z_3' \in \C$.
Therefore, every element of $G$ preserves a copy of $\ch{2}$ of polar vectors $\left[\begin{matrix} cz_1 & z_2 & z_3 \end{matrix} \right]^t$, where $z_1,z_2,z_3 \in \C$.

\vspace{3mm}

{\bf Acknowledgement.} 
The second author thanks to John Parker for useful discussions during staying in Hunan University and Yeuping Jiang for the hospitality staying in Changsha.

\end{document}